\def\NZQ{\mathbb}               % the font for N,Z,Q,R,C
\def\NN{{\NZQ N}}
\def\RR{{\NZQ R}}
\def\frk{\mathfrak}               % font for "Fraktur"
\def\mm{{\frk m}}
\def\Phi{{\frk N}}
\def\ab{{\bold a}}
\def\opn#1#2{\def#1{\operatorname{#2}}} % to make operators
\opn\chara{char} \opn\length{\ell} \opn\pd{pd} \opn\rk{rk}
\opn\projdim{proj\,dim} \opn\injdim{inj\,dim} \opn\rank{rank}
\opn\depth{depth} \opn\grade{grade} \opn\height{height}
\opn\embdim{emb\,dim} \opn\codim{codim}
\opn\Tr{Tr} \opn\bigrank{big\,rank}
\opn\superheight{superheight}\opn\lcm{lcm}
\opn\trdeg{tr\,deg}%\emph{
\opn\reg{reg} \opn\lreg{lreg} \opn\ini{in} \opn\lpd{lpd}
\opn\size{size}\opn{\mult}{mult}
\opn\div{div} \opn\Div{Div} \opn\cl{cl} \opn\Cl{Cl}
\opn\Spec{Spec} \opn\Supp{Supp} \opn\supp{supp} \opn\Sing{Sing}
\opn\Ass{Ass} \opn\Min{Min}
\opn\Ann{Ann} \opn\Rad{Rad} \opn\Soc{Soc}
\opn\Syz{Syz} \opn\Im{Im} \opn\Ker{Ker} \opn\Coker{Coker}
\opn\Am{Am} \opn\Hom{Hom} \opn\Tor{Tor} \opn\Ext{Ext}
\opn\End{End} \opn\Aut{Aut} \opn\id{id}
\opn\nat{nat}
\opn\pff{pf}%   \pf exists already
\opn\Pf{Pf} \opn\GL{GL} \opn\SL{SL} \opn\mod{mod} \opn\ord{ord}
\opn\Gin{Gin}
\opn\Hilb{Hilb}\opn\adeg{adeg}\opn\std{std}\opn\ip{infpt}
\opn\Pol{Pol}
\opn\sat{sat}
\opn\Var{Var}
\opn\aff{aff} \opn\con{conv} \opn\relint{relint} \opn\st{st}
\opn\lk{lk} \opn\cn{cn} \opn\core{core} \opn\vol{vol}
\opn\link{link} \opn\star{star}
\opn\gr{gr}
\def\pot#1#2{#1[\kern-0.28ex[#2]\kern-0.28ex]}
\opn\dirlim{\underrightarrow{\lim}}
\opn\inivlim{\underleftarrow{\lim}}
\let\union=\cup
\let\sect=\cap
\let\Sect=\bigcap
\let\Dirsum=\bigoplus
\let\to=\rightarrow
\def\Implies{\ifmmode\Longrightarrow \else
        \unskip${}\Longrightarrow{}$\ignorespaces\fi}
\def\implies{\ifmmode\Rightarrow \else
        \unskip${}\Rightarrow{}$\ignorespaces\fi}
\def\iff{\ifmmode\Longleftrightarrow \else
        \unskip${}\Longleftrightarrow{}$\ignorespaces\fi}
\newtheorem{Theorem}{Theorem}[section]
\newtheorem{Lemma}[Theorem]{Lemma}
\newtheorem{Proposition}[Theorem]{Proposition}
\newtheorem{Example}[Theorem]{Example}
\let\epsilon\varepsilon
\let\phi=\varphi
\let\kappa=\varkappa
\def\qed{\ifhmode\textqed\fi
      \ifmmode\ifinner\quad\qedsymbol\else\dispqed\fi\fi}
\def\textqed{\unskip\nobreak\penalty50
       \hskip2em\hbox{}\nobreak\hfil\qedsymbol
       \parfillskip=0pt \finalhyphendemerits=0}
\def\dispqed{\rlap{\qquad\qedsymbol}}
\opn\dis{dis}
\def\pnt{{\raise0.5mm\hbox{\large\bf.}}}
\opn\Lex{Lex}
\begin{document}

\title{Symbolic powers of monomial ideals which are generically complete intersections}

\author{Adnan Aslam}
\thanks{}
\subjclass{13C14, 13D02, 13D25, 13P10}
\address{Adnan Aslam, Abdus Salam School of Mathematical Sciences (ASSMS), GC University, Lahore, Pakistan.
}
\email{adnanaslam15@yahoo.com}

\begin{abstract}
We classify all unmixed monomial ideals $I$ of codimension 2 which are generically a complete intersection and
which have the property that the symbolic power algebra $A(I)$ is standard graded. We give a lower bound for the highest degree of a generator of $A(I)$ in the case that $I$ is a modification of  the vertex cover ideal of a bipartite graph, and show that this highest degree can be any given number.  We furthermore give an upper bound for the highest degree of a generator of the integral closure of  $A(I)$ in the case that $I$ is a monomial ideal which is  generically a complete intersection.

%\vskip 0.4 true cm
% \noindent
%  {\it Key words } : Monomial Ideals, Taylor Complexes, Linear Resolutions, Chordal Graphs.
\end{abstract}

\maketitle

\section*{Introduction}
The aim of the paper is to study the symbolic power algebra of monomial ideals. Classically one defines the symbolic power algebra of prime ideal $P$ in a Noetherian commutative ring $R$ as the graded algebra $\Dirsum_{k\geq 0} P^{(k)}t^k$ where $P^{(k)}$ is the $k$th symbolic power of $P$. In general, this algebra is not finitely generated, even though $R$ is Noetherian. Examples of this bad behavior have been given by Cowsik \cite{C}, Roberts \cite{R} and others. On the other hand, it has been shown in \cite{HHT1} the symbolic power algebra of a monomial ideal is always finitely generated. In the first section of this paper we recall the definition of symbolic powers for monomial ideals. In the case, that the monomial ideal $I$  is squarefree, it well understood when its symbolic power algebra $A(I)$ is standard graded, in other words, when the ordinary powers of $I$ coincide with the symbolic powers  of $I$, see \cite{HHT1},\cite{GRV1}, and \cite{SVV}.

The interest in symbolic powers of monomial ideals is partly due to the fact that symbolic powers of squarefree monomial ideals may be interpreted as vertex cover ideals of graphs and hypergraphs. This point of view has been stressed in the papers \cite{HHT1},\cite{HHT2} and  \cite{HHTZ}. On the other hand, not much is known about the generation of symbolic power algebras in the case that the monomial ideal is not squarefree. This paper is an attempt to  better understand symbolic power algebras of non-squarefree monomial ideals. The situation in this more general case is much more complicated. While the symbolic power algebra of an unmixed  squarefree monomial ideal of codimension 2 is generated in degree at most 2, the highest degree $d(A(I))$ of a generator of the algebra $A(I)$, where $I$ is an unmixed  monomial ideal of codimension 2 can be  any given number, as is shown in a family of examples in Section 2. In Section 1 we introduced some concepts and first recall a few facts about vertex cover algebras which will be needed in the following sections. The main result of Section 2 is Theorem~\ref{main} in which it is shown that the symbolic power algebra of monomial ideal $I$ of codimension 2 which is generically a complete intersection  is standard graded if and only if $I$ is a trivial modification of a vertex cover ideal of a bipartite graph, which means that it is obtained from the vertex cover ideal simply by variable substitutions. In Section 3 we give a general lower bound for the number $d(A(I))$ in the case that $I$ is a modification of a vertex cover ideal of a bipartite graph. To give an upper bound for $d(A(I))$ is much harder. However, in Section 4,  an explicit upper bound for the highest degree of a generator of the integral closure $\overline{A(I)}$ of $A(I)$ is given when $I$ is a monomial ideal which is an arbitrary generically complete intersection with no condition on the codimension. It remains an open question how the numbers $d(A(I))$ and $d(\overline{A(I)})$ are related to each other.

\section{A review on symbolic powers of monomial ideals}

Let $K$ be a field and $S=K[x_1,\ldots,x_n]$ the polynomial ring in $n$ indeterminates  over $K$, and let $I\subset S$ be a monomial ideal. Then $I$ has a unique irredundant presentation
\[
I=\Sect_{i=1}^m C_i.
\]
as an intersection of irreducible monomial ideals, see \cite{HH1}. The irreducible monomial ideals $C_i$ are all of the form $(x_{i_1}^{a_1},\ldots, x_{i_k}^{a_k})$. In particular, they are monomial complete intersections. One obtains from this presentation a canonical presentation of $I$ as an intersection of primary ideals
\begin{eqnarray}
\label{primary}
I=\Sect_{i=1}^rQ_i.
\end{eqnarray}
where each $Q_i$ is $P_i$-primary and is defined to be the intersection of all $C_j$ with $\sqrt{C_j}=P_i$. For example, if $I=(x_1^3,x_2^3,x_1^2x_3^2,x_1x_2x_3^2,x_2^2x_3^2)$ then $I$ has the irredundant presentation as intersection of irreducible ideals
\[
I=(x_1^3,x_2^3,x_3^2)\cap(x_1^2,x_2)\cap(x_1,x_2^2).
\]
We have $\Ass(x_1^2,x_2)=(x_1,x_2)=\Ass(x_1,x_2^2)$. Intersecting $(x_1^2,x_2)$ and $(x_1,x_2^2)$ we obtain $(x_1,x_2)$-primary ideal $(x_1^2,x_1x_2,x_2)$ and hence the irredundant primary decomposition
\[
I=(x_1^3,x_2^3,x_3^2)\cap(x_1^2,x_1x_2,x_2^2).
\]

Given a primary decomposition as in (\ref{primary}) one defines the {\em $k$th symbolic power} of $I$ to be
\[
I^{(k)}=\Sect_{i=1}^rQ_i^k.
\]
Obviously one has $I^k\subset I^{(k)}$ for all $k$. Note that there is no ambiguity in the definition of $I^{(k)}$ since the presentation of $I$ as an intersection of primary ideals as given in (\ref{primary}) is uniquely determined by $I$. Furthermore, if $I$ is generically complete intersection, as will be assumed in the following sections, this definition of $I^{(k)}$ coincide with one given in \cite{HHT1}

One of the basic questions related to symbolic powers is to find conditions when   $I^k= I^{(k)}$ for all $k$. Since $I^{(k)}I^{(l)}\subset I^{(k+l)}$ for all $k$ and $l$,  one may consider the so-called {\em symbolic power algebra}
\[
A(I)=\Dirsum_{k\geq 0}I^{(k)}t^k\subset S[t].
\]
This algebra contains the  Rees algebra $R(I)=\Dirsum_{k\geq 0}I^kt^k$, and the  following statements are equivalent:
\begin{enumerate}
\item[(a)] $I^k=I^{(k)}$ for all $k$.
\item[(b)] $R(I)=A(I)$.
\item[(c)] $A(I)$ is a standard graded $S$-algebra, i.e., $A(I)$ is generated over $S$ by elements of degree $1$.
\end{enumerate}
It is known that $A(I)$ is always finitely generated, see \cite[Theorem 3.2]{HHT1}. For squarefree monomial ideals this has first been shown in \cite[proposition1]{L}. In case that $I$ is a squarefree monomial ideal it is well understood when $A(I)$ is standard graded, see \cite[Theorem 5.1]{HHT1}. In particular, the following results will be of importance for our considerations. Assume that $I$ is a squarefree monomial ideal  which is obtained as an intersection of ideals of the form $(x_i,x_j)$. To such an ideal we can associate a graph $G$ on the vertex set $[n]$ for which $\{i,j\}$ is an edge of $G$ if and only if $(x_i,x_j)$ appears in the intersection of $I$. Thus we can write
\[
I=\Sect_{\{i,j\}\in E(G)}(x_i,x_j).
\]
The monomial generators of $I$ correspond to the vertex covers of $G$. Indeed, $u=x_{i_1}x_{i_2}\cdots x_{i_k}$ is a generator of $I$ if and only if the set $C=\{i_1,i_2,\ldots,i_k\}$ is a vertex cover of $G$, that is, $C\sect \{i,j\}\neq \emptyset $ for all edges $\{i,j\}\in E(G)$. One therefore calls $I$ the {\em vertex cover ideal} of $G$ and denotes it by $I_G$. The vertex cover ideal is in fact the Alexander dual of the edge ideal associated to the graph $G$.  In the next section we will use the following result that can be found in  \cite[Theorem 5.1(b)]{HHT1},
\begin{Theorem}
\label{bipartite}
The following conditions are equivalent:
\begin{enumerate}
\item[{\em (a)}] $R(I_G)= A(I_G)$.
\item[{\em (b)}] $I_G^2=I_G^{(2)}$.
\item[{\em (c)}] $G$ is a bipartite graph.
\end{enumerate}
\end{Theorem}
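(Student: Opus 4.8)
The plan is to prove the cycle of implications (a)$\Rightarrow$(b)$\Rightarrow$(c)$\Rightarrow$(a), using throughout the combinatorial description of symbolic powers. Since $I_G=\Sect_{\{i,j\}\in E(G)}(x_i,x_j)$, we have $I_G^{(k)}=\Sect_{\{i,j\}\in E(G)}(x_i,x_j)^k$, and a monomial $x^a=x_1^{a_1}\cdots x_n^{a_n}$ lies in $(x_i,x_j)^k$ exactly when $a_i+a_j\geq k$. Hence $x^a\in I_G^{(k)}$ if and only if $a_i+a_j\geq k$ for every edge $\{i,j\}\in E(G)$, and I will use this description repeatedly. Recall also from the list preceding the theorem that condition (a) is equivalent to the assertion that $I_G^k=I_G^{(k)}$ for all $k$, and that condition (b) is exactly the instance $k=2$ of this assertion.

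The implication (a)$\Rightarrow$(b) is then immediate. For (b)$\Rightarrow$(c) I argue by contraposition: assume $G$ is not bipartite, so $G$ contains an odd cycle, and let $W$ be its vertex set with $|W|=2s+1$. Define $a$ by $a_v=1$ for $v\in W$ and $a_v=2$ for $v\notin W$. Every edge then has $a_i+a_j\geq 2$, so $x^a\in I_G^{(2)}$ by the description above. On the other hand, if $x^a$ were in $I_G^2$ then $a\geq c_1+c_2$ for incidence vectors $c_1,c_2$ of vertex covers of $G$; restricting to $W$, each $c_\ell$ induces a vertex cover of the odd cycle, which must contain at least $s+1$ of its vertices, so $\sum_{v\in W}(c_1+c_2)_v\geq 2s+2>2s+1=\sum_{v\in W}a_v$, contradicting $a\geq c_1+c_2$. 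Thus $x^a\in I_G^{(2)}\setminus I_G^2$ and (b) fails.

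The substantial direction is (c)$\Rightarrow$(a), where I must show that bipartiteness forces $I_G^{(k)}\subseteq I_G^k$ for all $k$. By the description of symbolic powers this reduces to the combinatorial claim that whenever $a\in\ZZ_{\geq 0}^n$ satisfies $a_i+a_j\geq k$ for all edges, one can write $a\geq c_1+\cdots+c_k$ with each $c_\ell$ the incidence vector of a vertex cover; indeed $x^{c_1}\cdots x^{c_k}=x^{c_1+\cdots+c_k}$ then divides $x^a$ and exhibits $x^a\in I_G^k$. I would prove this claim by induction on $k$, the case $k=1$ being $I_G^{(1)}=I_G$. The inductive step rests on a \emph{peeling lemma}: one vertex cover must be extracted so that the remainder still has all edge-sums at least $k-1$. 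Here bipartiteness is used decisively and asymmetrically. Fix a bipartition $V(G)=V_1\cup V_2$ and set
\[
C=\bigl(V_1\cap\supp(a)\bigr)\cup\{\,v\in V_2 : a_v\geq k\,\},
\]
with incidence vector $c$. One checks that $C$ is a vertex cover (for an edge $\{i,j\}$ with $i\in V_1$, either $a_i\geq 1$ so $i\in C$, or $a_i=0$ so $a_j\geq k$ and $j\in C$), that $c\leq a$, and that on every tight edge $a_i+a_j=k$ exactly one endpoint lies in $C$, so that $(a-c)_i+(a-c)_j\geq k-1$ for all edges. Then $x^{a-c}\in I_G^{(k-1)}\subseteq I_G^{k-1}$ by induction, and prepending $c$ gives the desired decomposition.

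I expect the genuine obstacle to be discovering this peeling vector: the verifications above are routine once $C$ is written down, but the correct construction is not obvious, and it is precisely the crossing structure of the bipartition—together with the asymmetric thresholds $1$ on $V_1$ and $k$ on $V_2$—that makes a single cover removable while preserving all the remaining inequalities. Conceptually this is the shadow of a polyhedral fact: the incidence matrix defining the cone $\{x\geq 0 : x_i+x_j\geq 1\}$ is totally unimodular exactly when $G$ is bipartite, whence the cone has the integer decomposition property, and the explicit $C$ above is simply a hands-on realization of that decomposition. If the direct construction proved more delicate than anticipated (for instance in handling the vertices where $a$ vanishes), I would fall back on the total-unimodularity argument to guarantee the decomposition abstractly.
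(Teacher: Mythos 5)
Your proof is correct, but note that the paper itself does not prove this statement: it is quoted verbatim from \cite[Theorem~5.1(b)]{HHT1} and used as a black box, so there is no in-paper argument to compare against. Judged on its own, your argument is sound. The reduction of membership in $I_G^{(k)}$ to the inequalities $a_i+a_j\geq k$ is the right combinatorial model; the odd-cycle counting argument for (b)$\Rightarrow$(c) is standard and correctly executed (the key point, that a vertex cover of $C_{2s+1}$ has at least $s+1$ vertices, is used properly); and the peeling set $C=(V_1\cap\supp(a))\cup\{v\in V_2: a_v\geq k\}$ does exactly what you claim --- in particular the only case needing care, an edge with both endpoints in $C$, forces $a_i+a_j\geq k+1$ and so survives the subtraction. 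Your route is more elementary and more self-contained than the one in the cited source, which derives the result from the total unimodularity of the bipartite incidence matrix via the general machinery of vertex cover algebras (and, in \cite{GRV1}, from max-flow--min-cut/Mengerian considerations); your explicit cover extraction is precisely the integer decomposition property that total unimodularity guarantees abstractly, as you yourself observe in the closing remark. The only cosmetic quibble is that in (b)$\Rightarrow$(c) you should say explicitly that a monomial lies in $I_G^2$ iff it is divisible by a product of two monomial generators of $I_G$, each of which is $x^{c}$ for a vertex cover $C$; this is what licenses the inequality $a\geq c_1+c_2$.
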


\section{Unmixed monomial ideals of codimension 2 which are generically a complete intersection}

A monomial ideal $I\subset S= K[x_1,\ldots,x_n]$ of codimension 2 is unmixed and generically a complete intersection if and only if
\[
I=\Sect_{1\leq i<j\leq n}(x_i^{a_{ij}}, x_j^{a_{ji}}),
\]
where  $A=(a_{ij})$ is an $n\times n$-matrix of non-negative integers.  Whenever $a_{ij}=0$ or $a_{ji}=0$, we have $(x_i^{a_{ij}}, x_j^{a_{ji}})=S$, and we may omit this ideal in the intersection. Thus we may rewrite the above intersection as
\[
I=\Sect_{\{i,j\}\in E(G)}(x_i^{a_{ij}}, x_j^{a_{ji}}),
\]
where $G$ is a simple graph on the vertex set $[n]$ and where for each edge $\{i,j\}$ of $G$  the numbers $a_{ij}$ and $a_{ji}$ are positive integers. The graph $G$ introduced is uniquely determined by $I$, and  $\sqrt{I}= \Sect_{\{i,j\}\in E(G)}(x_i, x_j)$ is the vertex cover ideal $I_G$ of $G$.

We say that $J=\Sect_{1\leq i<j\leq n}(x_i^{b_{ij}}, x_j^{b_{ji}})$ is a {\em trivial modification} of
$I=\Sect_{1\leq i<j\leq n}(x_i^{a_{ij}}, x_j^{a_{ji}})$, if there exist integers  $c_i>0$ such that $b_{ij}=c_ia_{ij}$ for all $i$ and $j$.
In particular,  $I$ is a trivial modification  of $\Sect_{\{i,j\}\in E(G)}(x_i, x_j)$, if $a_{kl}$ does not depend on $l$ (and only on $k$).

We begin with an example which shows that for a  non-squarefree monomial ideal $I$, the algebra $A(I)$ may have arbitrarily high degree generators.

\begin{Example}
\label{goodexample}
{\em Given an integer $n$. Let $R$  be  the polynomial ring $\mathbb Q[a,b,c]$ and $I=(a^n,ab,bc)$. Then the highest degree of an algebra generator of $A(I)$ is precisely $n$. In particular, the highest degree of a generator of the symbolic power of a monomial ideal of height 2 may be arbitrarily large.}
{\em
For the proof we note first that
\[
I=(a^n,b)\cap(a,c).
\]
Hence $I$ is a monomial ideal of codimension 2.
We  claim that the $k$th symbolic power $I^{(k)}$ of $I$  is minimally generated by the monomials
\begin{eqnarray}
\label{generators}
&&a^{i}b^{k-\lfloor{i/n}\rfloor}c^{k-i} \quad \text{with}\quad  0\leq i \leq k,  \\
&&a^{{nj}}b^{k-j}\quad \text{with} \quad \lfloor{k/n}\rfloor <j \leq k .\nonumber
\end{eqnarray}
Indeed, we have that
\[
I^{(k)}= (a^n,b)^k\cap(a,c)^k.
\]
Therefore $I^{(k)}$ is generated by the monomials
\begin{eqnarray}
\label{allgenerators}
a^{\max{(nj,i)}} b^{k-j} c^{k-i},\quad 0\leq i, j\leq k.
\end{eqnarray}
Therefore, we have the generators
\begin{eqnarray}
\label{first}
a^{i}b^{k-j}c^{k-i}\quad  \text{with}\quad 0\leq i,j\leq k \quad\text{and}\quad nj\leq i,
\end{eqnarray}
and
\begin{eqnarray}
\label{second}
a^{nj}b^{k-j}c^{k-i}\quad  \text{with}\quad 0\leq i,j\leq k \quad\text{and}\quad nj> i.
\end{eqnarray}
Among the generators (\ref{first}) it is enough to take the generators
\[
a^{i}b^{k-\lfloor{i/n}\rfloor}c^{k-i}\quad \text{with} \quad 0\leq i \leq k.
\]

And among the generators (\ref{second}) it is enough to take the generators
\[
a^{{nj}}b^{k-j}\quad \text{with} \quad \lfloor{k/n}\rfloor <j \leq k .
\]

This implies the assertion about the generators of $I^{(k)}$.}

{\em
Next we claim that the element  $a^nb^{n-1}\in I^{(n)}$ is a minimal generator of the algebra. Suppose this is not the case. Then, since $a^nb^{n-1}$ is a minimal generator of $I^{(n)}$,  there exist integers $k_1,k_2> 0$ with $k_1+k_2=n$,  a minimal generators  $u$ of $ I^{(k_1)}$ and a minimal generator $v$ of  $I^{(k_2)}$ such that  $a^nb^{n-1}=uv$.  Since $a^nb^{n-1}$ does not contain the factor $c$, it follows from (\ref{generators}) that $u=a^{k_1}b^{k_1-\lfloor k_1/n\rfloor}$ and  $v=a^{k_2}b^{k_2-\lfloor k_2/n\rfloor}$ with $k_i<n$, $k_1+k_2=n$ and $k_1+k_2-\lfloor k_1/n\rfloor-\lfloor k_2/n\rfloor=n-1$. Since $\lfloor k_i/n\rfloor=0$ for $i=1,2$, this is impossible.}

{\em
It remains to be shown that for $k>n$, the monomials in (\ref{generators}) can be presented  as products of generators of the symbolic powers $I^{(l)}$ with $0<l<k$. In fact,
\begin{itemize}
\item for $0\leq i<k$ we have
$
a^{i}b^{k-{\lfloor{i/n}\rfloor}}c^{k-i}= (a^{i}b^{k-1-{\lfloor{i/n}\rfloor}}c^{k-1-i})(bc)
$
with\\  $a^{i}b^{k-1-{\lfloor{i/n}\rfloor}}c^{k-1-i}\in I^{(k-1)}$ and $bc \in I$
\item
$
a^{k}b^{k-{\lfloor{k/n}\rfloor}}=(a^{k-1}b^{k-1-{\lfloor{k/n}\rfloor}})(ab)
$
with $a^{k-1}b^{k-1-{\lfloor{k/n}\rfloor}}\in I^{(k-1)}$ and $ab\in I$.
\item for $\lfloor k/n\rfloor <j<k$ we have
$
a^{nj}b^{k-j}=(a^{n(j-1)}b^{k-1-j})(ab)
$
with \\ $a^{n(j-1)}b^{k-1-j}\in I^{(k-1)}$ and $ab\in I$.
\item
$
a^{nk}=a^{n(k-1)}a^n
$
with $a^{n(k-1)}\in I^{k-1}$ and $a^n\in I$.
\end{itemize}}
\end{Example}

Next  we classify all unmixed monomial ideals $I$ of codimension 2 which are generically a complete intersection and
which have the property that $A(I)$ is standard graded.

Let $A$ be finitely generated graded $S$-algebra. We denote by $d(A)$ the highest degree of the minimal generators of $A$. By using this notation we have that $A(I)$ is standard graded if and only if $d(A(I))=1$.

Before proving the main result of this section we observe

\begin{Proposition}
\label{trivialmod}
Let $I$ be an arbitrary monomial ideal and $J$ a trivial modification  of $I$. Then $d(A(J))=d(A(I))$.
\end{Proposition}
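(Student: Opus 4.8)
The plan is to realize the passage from $I$ to $J$ as a faithfully flat base change along the power substitution $x_i\mapsto x_i^{c_i}$, and then to check that this base change preserves both the symbolic powers and the set of degrees in which $A(-)$ acquires an algebra generator. Write $I=\Sect_{\{i,j\}\in E(G)}(x_i^{a_{ij}},x_j^{a_{ji}})$ and $J=\Sect_{\{i,j\}\in E(G)}(x_i^{b_{ij}},x_j^{b_{ji}})$ with $b_{ij}=c_ia_{ij}$. Let $T=K[x_1^{c_1},\dots,x_n^{c_n}]\subset S$; then $S$ is a free, hence faithfully flat, $T$-module (a basis is $\{x^\beta: 0\le\beta_i<c_i\}$). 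The map $\phi\colon S\to T$, $x_i\mapsto x_i^{c_i}$, is a ring isomorphism, and with $u_i:=x_i^{c_i}$ it carries $I$ to $\tilde I:=\phi(I)=\Sect_{\{i,j\}\in E(G)}(u_i^{a_{ij}},u_j^{a_{ji}})$, a verbatim copy of $I$ in the variables $u_i$. Hence $A(\tilde I)\iso A(I)$ (a relabeling of variables) and $d(A(\tilde I))=d(A(I))$. Since flat extension commutes with finite intersections of ideals, $\tilde I S=\Sect(x_i^{b_{ij}},x_j^{b_{ji}})S=J$. It therefore suffices to prove $d(A(\tilde I S))=d(A(\tilde I))$, i.e.\ that the top generator degree is unchanged when the ground ring is enlarged from $T$ to $S$.

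The key step is the identity $J^{(k)}=\tilde I^{(k)}S$ for all $k$. Each $\tilde C_{ij}:=(u_i^{a_{ij}},u_j^{a_{ji}})$ is $(u_i,u_j)$-primary in $T$, so $\tilde I=\Sect\tilde C_{ij}$ is the canonical primary decomposition and $\tilde I^{(k)}=\Sect\tilde C_{ij}^{\,k}$. Extension to $S$ commutes with finite intersections (flatness) and with powers, so $\tilde I^{(k)}S=\Sect(\tilde C_{ij}S)^{k}$, where $\tilde C_{ij}S=(x_i^{b_{ij}},x_j^{b_{ji}})S$ is $(x_i,x_j)$-primary in $S$. As distinct edges give distinct associated primes, $\Sect\tilde C_{ij}S$ is the canonical primary decomposition of $J$; hence $J^{(k)}=\Sect(\tilde C_{ij}S)^{k}=\tilde I^{(k)}S$. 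Consequently $A(J)=\Dirsum_k(\tilde I^{(k)}S)t^k=A(\tilde I)\tensor_T S$ as graded $S$-algebras.

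The last step is a base-change statement. For any finitely generated $\NN$-graded algebra $A=\Dirsum_{k\ge0}A_k$ over $A_0$, a homogeneous generating set must contain an element of degree $k\ge1$ exactly when the $A_0$-module of indecomposables $Q_k(A):=A_k/\sum_{l=1}^{k-1}A_lA_{k-l}$ is nonzero; thus $d(A)=\max\{k: Q_k(A)\neq0\}$. The exact functor $-\tensor_T S$ commutes with quotients and with finite sums of submodules, and by the previous paragraph $A(\tilde I)_l A(\tilde I)_{k-l}\tensor_T S=J^{(l)}J^{(k-l)}=A(J)_lA(J)_{k-l}$, so $Q_k(A(\tilde I))\tensor_T S=Q_k(A(J))$. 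By faithful flatness of $T\subset S$ a finitely generated $T$-module vanishes iff its extension to $S$ vanishes, so $Q_k(A(\tilde I))=0$ iff $Q_k(A(J))=0$. Hence the two algebras have generators in exactly the same degrees, giving $d(A(J))=d(A(\tilde I))=d(A(I))$.

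The one place where genuine work is required is the identity $J^{(k)}=\tilde I^{(k)}S$: one must know that the extensions $\tilde C_{ij}S$ of the primary components remain primary with distinct associated primes, so that $\Sect\tilde C_{ij}S$ is the decomposition that actually computes $J^{(k)}$ and not merely an ideal containing it. In the codimension-$2$ generically-complete-intersection case this is immediate, since each $\tilde C_{ij}S=(x_i^{b_{ij}},x_j^{b_{ji}})S$ is patently $(x_i,x_j)$-primary and $G$ is a simple graph. One may also bypass primary decomposition entirely and verify $J^{(k)}=\tilde I^{(k)}S$ through the explicit membership test that $x^\beta\in J^{(k)}$ holds iff the monomial $\prod_i x_i^{\lfloor\beta_i/c_i\rfloor}$ lies in $I^{(k)}$, which is a direct consequence of the floor identity $\lfloor\beta_i/(c_ia_{ij})\rfloor=\lfloor\lfloor\beta_i/c_i\rfloor/a_{ij}\rfloor$. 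Everything else above is a formal consequence of the faithful flatness of $T\subset S$.
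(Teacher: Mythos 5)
Your proof is correct, and its core is the same as the paper's: both realize $J$ as the image of $I$ under the power substitution $x_i\mapsto x_i^{c_i}$ and reduce everything to the fact that this substitution commutes with forming symbolic powers. You differ in two places. First, the paper proves the commutation $\varphi(I^{(k)})=\varphi(I)^{(k)}$ by checking that $\varphi$ commutes with intersections of monomial ideals via the identity $\varphi(\lcm(u,v))=\lcm(\varphi(u),\varphi(v))$, which handles an arbitrary monomial ideal at one stroke; you argue instead through the primary components, which forces you to check (as you yourself flag) that the extended components stay primary with distinct radicals --- immediate in the codimension-two setting you write down and true in general, but slightly more bookkeeping, and note that the Proposition as stated is for an arbitrary monomial ideal. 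Second, and to your credit, your finishing step is more careful than the paper's: the paper asserts that $\hat{\varphi}\colon A(I)\to A(J)$ is surjective and hence an isomorphism, but the set-theoretic image of $A(I)$ is only an algebra over $\varphi(S)=K[x_1^{c_1},\ldots,x_n^{c_n}]$, not over $S$ (already in degree $0$ the image misses $x_1$ when $c_1>1$); $A(J)$ is the $S$-algebra generated by that image. Your faithfully flat base change along $T\subset S$ together with descent of the modules of indecomposables $Q_k$ is exactly what is needed to close that gap and legitimately conclude that the generating degrees, and hence $d$, are preserved.
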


\begin{proof}
Let $c_1,\ldots, c_n$ be positive integers such that $J$ arises from $I$ by the substitution $x_i\mapsto x_i^{c_i}$ for $i=1,\ldots,n$, and let  $\varphi\: S\to S$ be the $K$-algebra homomorphism with $\varphi(x_i)=x_i^{c_i}$. For an ideal $J=(f_1,\ldots,f_m)\subset S$ we denote by $\varphi(J)$ the ideal in $S$ generated by  the elements $\varphi(f_i)$, $i=1,\ldots,m$. Then we have $\varphi(I^k)=\varphi(I)^k$ for all $k$. We claim that for any monomial ideal $I$ we also have $\varphi(I^{(k)})=\varphi(I)^{(k)}$ for all $k$. To show this it suffices to prove that whenever $I$ and $J$ are monomial ideals, then $\varphi(I\sect J)=\varphi(I)\sect \varphi(J)$. Indeed, if $I=(u_1,\ldots,u_r)$ and $J=(v_1,\ldots,v_s)$,  then $I\sect J=(\{\lcm(u_i,v_j)\}_{i=1,\ldots,r\atop j=1,\ldots,s})$. Thus the claim follows from the fact that
\[
\varphi(\lcm(u,v))=\lcm(\varphi(u),\varphi(v))\quad \text{for all monomials}\quad u,v.
\]
Let $t$ be an indeterminate over $S$. Then $\varphi$ induces a graded $K$-algebra homomorphism $\hat{\varphi}\: S[t]\to S[t]$ with $\hat{\varphi}(\sum_kf_kt^k)=\sum_k\varphi(f_k)t^k$. Restricting $\hat{\varphi}$ to $A(I)=\Dirsum_k I^{(k)}t^k$ we obtain
\[
\hat{\varphi}(A(I))=\Dirsum_k \varphi(I^{(k)})t^k=\Dirsum_k \varphi(I)^{(k)}t^k=\Dirsum_k J^{(k)}t^k=A(J).
\]
This shows that $\hat{\varphi}\: A(I)\to A(J)$ is surjective. Since $\hat{\varphi}$ is injective, it follows that $A(I)$ and $A(J)$ are isomorphic, and in particular, $d(A(I))=d(A(J))$, as desired.
\end{proof}

\begin{Theorem}
\label{main}
Let $I$ be an unmixed monomial ideal of codimension 2 which is generically a complete intersection. Then the following conditions are equivalent:
\begin{enumerate}
\item[{\em (a)}] $I^k=I^{(k)}$ for all $k$.
\item[{\em (b)}] $I^2=I^{(2)}$.
\item[{\em (c)}] $I$ is a trivial modification of a vertex cover ideal of a bipartite graph.
\end{enumerate}
\end{Theorem}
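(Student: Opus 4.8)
The plan is to prove the cycle of implications $(a)\Rightarrow(b)\Rightarrow(c)\Rightarrow(a)$. The implication $(a)\Rightarrow(b)$ is trivial, and $(c)\Rightarrow(a)$ is immediate from what is already available: if $I$ is a trivial modification of the vertex cover ideal $I_G$ of a bipartite graph $G$, then Proposition~\ref{trivialmod} gives $d(A(I))=d(A(I_G))$, while Theorem~\ref{bipartite} gives $R(I_G)=A(I_G)$, i.e.\ $d(A(I_G))=1$; hence $d(A(I))=1$, which is condition $(a)$. All the substance is in $(b)\Rightarrow(c)$.

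For this, write $I=\Sect_{\{i,j\}\in E(G)}(x_i^{a_{ij}},x_j^{a_{ji}})$, so that $I^{(2)}=\Sect_{\{i,j\}}(x_i^{a_{ij}},x_j^{a_{ji}})^2$. Because each factor is a two-generated complete intersection, a monomial $x^c$ lies in $I^{(2)}$ if and only if $\lfloor c_i/a_{ij}\rfloor+\lfloor c_j/a_{ji}\rfloor\ge 2$ for every edge $\{i,j\}$, while $x^c\in I^2$ if and only if $c\ge d+e$ (componentwise) for exponent vectors with $x^d,x^e\in I$. I will use this numerical description throughout. The first reduction is that both $(\ )^2$ and $(\ )^{(2)}$ commute with inverting variables: localizing at the multiplicative set generated by all $x_\ell$ with $\ell\notin\{i,j,k\}$ turns every $(x_u^{a_{uv}},x_v^{a_{vu}})$ with an endpoint outside $\{i,j,k\}$ into the unit ideal, so $I^2=I^{(2)}$ forces $\tilde I^2=\tilde I^{(2)}$ for $\tilde I=\Sect_{e\subseteq\{i,j,k\}}(x_u^{a_{uv}},x_v^{a_{vu}})$. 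Since $\tilde I$ involves only $x_i,x_j,x_k$, a monomial in these variables lies in $\tilde I^2$ (resp.\ $\tilde I^{(2)}$) precisely when it does so before localizing, so it suffices to test candidate obstructions on the subgraph induced on three vertices.

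The heart of the matter is then to show that $(b)$ forces the exponents to be \emph{uniform}, i.e.\ $a_{ij}=c_i$ independent of $j$. Suppose some vertex $i$ has two neighbours $j,k$ with $a_{ij}>a_{ik}$. If $\{j,k\}\notin E(G)$, the induced subgraph is the path $j-i-k$, and a direct computation with the criterion above shows that $x_i^{\max(a_{ij},2a_{ik})}x_j^{a_{ji}}$ lies in $\tilde I^{(2)}\setminus\tilde I^2$: taking the $x_k$-exponent to be $0$, any splitting must cover $\{i,k\}$ by spending at least $a_{ik}$ on $x_i$ in each part, and then covering $\{i,j\}$ forces a total spend exceeding the tight budget at $i$ or at $j$. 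If $\{j,k\}\in E(G)$ the subgraph is a triangle, and one must again produce a monomial in $\tilde I^{(2)}\setminus\tilde I^2$. This triangle case is the main obstacle: the extra edge $\{j,k\}$ can be covered by loading weight onto the $x_j$- or $x_k$-exponent, which tends to re-open the splitting, so no single closed-form obstruction works for all exponent patterns. The argument instead proceeds by a short case analysis on the floors $\lfloor a_{ji}/a_{jk}\rfloor$, $\lfloor a_{ki}/a_{kj}\rfloor$, exhibiting in each case an explicit non-splittable element; for instance, when those floors already sum to at least $2$ (so $\{j,k\}$ is doubly covered by the smaller exponents), the monomial $x_i^{a_{ij}}x_j^{a_{ji}}x_k^{a_{ki}}$ works. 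Carrying this out shows that whenever two exponents at a vertex differ, $(b)$ fails; hence under $(b)$ the exponents are uniform.

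Finally I assemble. Uniform exponents mean exactly that $I$ is a trivial modification of the squarefree ideal $I_G=\Sect_{\{i,j\}\in E(G)}(x_i,x_j)$. By Proposition~\ref{trivialmod} the graded algebras $A(I)$ and $A(I_G)$ are isomorphic, so $A(I)$ has a minimal generator in degree $2$ if and only if $A(I_G)$ does; since $(b)$ says $A(I)$ has none, we get $I_G^2=I_G^{(2)}$, and Theorem~\ref{bipartite} then yields that $G$ is bipartite. Thus $I$ is a trivial modification of the vertex cover ideal of a bipartite graph, which is $(c)$. In particular a uniform-exponent triangle is already caught at this last step, since $C_3$ is not bipartite; the genuinely new work is the non-uniform triangle computation flagged above, which is where I expect the real difficulty to lie.
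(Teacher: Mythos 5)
Your overall architecture is sound and in one respect tidier than the paper's: you first force the exponents at each vertex to be uniform by localizing down to three-vertex configurations, and only afterwards invoke Proposition~\ref{trivialmod} together with Theorem~\ref{bipartite} to rule out non-bipartite $G$; the paper instead splits on whether $\sqrt{I}$ is bipartite and, in the non-bipartite case, runs a separate reduction along a chordless odd cycle. Your localization reduction, the floor criterion for membership in $I^{(2)}$, and the path witness $x_i^{\max(a_{ij},2a_{ik})}x_j^{a_{ji}}$ are all correct and agree, after one further localization, with the paper's computation in the bipartite non-uniform case. One small point to make precise at the end: Proposition~\ref{trivialmod} as stated only gives $d(A(I))=d(A(I_G))$, whereas you need the degree-two statement $I^2=I^{(2)}\Rightarrow I_G^2=I_G^{(2)}$; this does follow from the isomorphism constructed in its proof (or directly from the fact that $\varphi(A)=\varphi(B)$ forces $A=B$ for monomial ideals $A\subseteq B$), but it is not literally the cited statement.

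The genuine gap is exactly where you flag it: the triangle. You verify only the subcase $\lfloor a_{ji}/a_{jk}\rfloor+\lfloor a_{ki}/a_{kj}\rfloor\ge 2$ and defer the remaining exponent patterns to an unexecuted case analysis, so as written the uniformity claim---and with it all of (b)$\Rightarrow$(c)---is unproved whenever the offending vertex lies on a triangle. Moreover the premise of the deferral is wrong: a closed-form witness exists and does not involve the floors on the opposite edge at all. Writing the triangle ideal as $I=(x_1^{a_1},x_2^{b_1})\cap(x_2^{b_2},x_3^{c_1})\cap(x_3^{c_2},x_1^{a_2})$ with $a_1\ge a_2$, the paper exhibits $x_1^{a_1}x_2^{\max(b_1,b_2)}x_3^{c_1}\in I^{(2)}\setminus I^2$ when $a_1\ge 2a_2$, and $x_1^{a_1}x_2^{\max(b_1,b_2)}x_3^{\max(c_1,c_2)}\in I^{(2)}\setminus I^2$ when $a_2\le a_1<2a_2$; membership in $I^{(2)}$ is immediate from your floor criterion, and non-membership in $I^2$ is a degree count of the same kind you carry out for the path (any factorization must cover the edge $\{1,2\}$ twice, which either overspends the $x_1$- or the $x_2$-budget once the edges $\{2,3\}$ and $\{1,3\}$ are also covered twice). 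Note these witnesses work even when $a_1=a_2$, so the paper disposes of uniform triangles by the same computation, whereas in your scheme the uniform non-bipartite case is absorbed into the final appeal to Theorem~\ref{bipartite}; that division of labour is fine, but you must either import the paper's two witnesses or actually carry out your case analysis before the proof is complete.
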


\begin{proof}
The implication (a) \implies (b) is trivial. Next we show that (c)\implies (a): The vertex cover ideal $J$ of a bipartite graph is standard graded, see Theorem~\ref{bipartite}. In other words, $d(A(J))=1$. Since $I$ is a trivial modification of $J$, Proposition~\ref{trivialmod} implies that $d(A(I)))=1$

It remains to be shown that (b) \implies (c). Assume first that $\sqrt I = I_G$ for a bipartite graph $G$ and $I$ is a not trivial modification of vertex cover ideal of  a bipartite graph $G$. It is convenient to identify the edges of $G$ with variables in the polynomial ring. Thus let  the vertex set $V(G)$ of $G$ be $\{x_1,\ldots,  x_r\}\union \{y_1,\,\ldots,y_s\}$ with the edge set $E(G)\subset \{x_1,\ldots, x_r\}\times \{y_1,\ldots, y_s\}$. Then
\[
I=\Sect_{\{x_i,y_j\}\in E(G)}(x_i^{a_{ij}}, y_j^{b_{ij}}).
\]
Since we assume that $I$ is not a trivial modification of $I_G$ there exits an integer $i$, say $i=1$, such that
\[
I_1=\Sect_{\{x_1,y_j\}\in E(G)}(x_1^{a_{1j}}, y_j^{b_{1j}})
\]
is not a trivial modification of $\Sect_{\{x_1,y_j\}\in E(G)}(x_1, y_j)$.
Let $G_1$ be the subgraph of $G$ with $E(G_1)=\{\{x_1,y_j\}\:\; \{x_1,y_j\}\in E(G)\}$. Then
$I=I_1\sect I_2$,  where
\[
I_2=\Sect_{\{x_i,y_j\}\in E(G)\atop i\neq 1}(x_i^{a_{ij}}, y_j^{b_{ij}}),
\]
and $I_1$ is not a trivial modification of $I_{G_1}$.

Let $f=x_2\cdots x_r$. Then for the localization with respect to $f$ we have  $(I_1^2)_f=(I^2)_f$ and $(I_1^{(2)})_f=(I^{(2)})_f$.
Assume we know already  $I_1^2\neq I_1^{(2)}$. If $(I_1^2)_f\neq (I_1^{(2)})_f$, then  $(I^2)_f\neq (I^{(2)})_f$. But then also
$I^2\neq I^{(2)}$, as we want to show. Thus we have reduced the problem to show that for an  ideal of the type
\[
I=(x_1^{a_1},y_1^{b_1})\sect \cdots \sect (x_1^{a_r},y_r^{b_r})
\]
with $a_1\geq a_2\geq \cdots \geq a_r$ and one of the inequalities is strict. Since $I$ is a trivial modification of the  ideal $(x_1^{a_1},y_1)\sect \cdots \sect (x_1^{a_r},y_r)$,  we may as well assume that
all $b_i=1$, see Proposition~\ref{trivialmod}. Take the smallest $i$ such that $a_1>a_i$. Then  for $a_1\geq 2a_i$,
\[
x_1^{a_1}y_1y_2\cdots y_{i-1}\in I^{(2)}
\]
and for $a_1<2a_i$,
\[
x_1^{2a_i}y_1y_2\cdots y_{i-1}\in  I^{(2)}
\]
We have
\[
I =(x_1^{a_1}, x_1^{a_2}y_1, x_1^{a_3}y_1y_2,\ldots, x_1^{a_i}y_1y_2\cdots y_{i-1},\ldots , y_1\cdots y_r).
 \]
It is obvious that $x_1^{a_1}y_1y_2\cdots y_{i-1}\not\in I^2$.  On the other hand, the possible product of two generators of $I$ to get  $x_1^{2a_i}y_1y_2\cdots y_{i-1}$ would be $x_1^{a_1}( x_1^{a_i}y_1y_2\cdots y_{i-1})$. But this is also not possible, because $a_1+a_i>2a_i$.

Assume now that $G$ is not bipartite. Then $G$ contains an odd cycle. If this cycle has a chord,  then this chord decomposes this cycle into two smaller cycles, where one of them is  again an odd cycle. This argument shows that $G$ contains an odd cycle with  no chord. Suppose first that this cycle has length $>3$. By a localization argument as in the first part of the proof (c)\implies (b) we may assume that $G$ coincides with this cycle. Say, the edges of $G$ are  $\{1,n\}$ and $\{i,i+1\}$ for $i=1,\ldots,n-1$ with $n>3$. Localizing $I$ at $f=x_3x_5\cdots x_{n-1}$ we are reduced to show that  for the ideal $J=(x_1^{a_1},x_2^{b_1})\sect(x_1^{a_2}, x_n^{b_2})$  we have that $J^2\neq J^{(2)}$. Without loss of generality we may assume that $a_1> a_2$ and $b_1=1=b_2$ see Proposition~\ref{trivialmod}. If $a_1\geq 2a_2$ then $x_1^{a_1}x_2\in J^{(2)}\setminus J^2$, if $a_1<2a_2$ then $x_1^{2a_2}x_2\in J^{(2)}\setminus J^2$, as desired.

Finally we assume that $E(G)= \{\{1,2\},\{2,3\},\{1,3\}\}$. Then
\[
I=(x_1^{a_1},x_2^{b_1})\sect (x_2^{b_2},x_3^{c_1})\sect (x_3^{c_2},x_1^{a_2}).
\]
We want so show that $I^{(2)}\neq I^2$.  Notice that
\begin{eqnarray*}
I&=&(x_1^{\max(a_1,a_2)}x_2^{b_2},x_1^{a_1}x_2^{b_2}x_3^{c_2},x_1^{\max(a_1,a_2)}x_3^{c_1},x_1^{a_1}x_3^{\max(c_1,c_2)},
x_1^{a_2}x_2^{\max(b_1,b_2)},x_2^{\max(b_1,b_2)}x_3^{c_2},\\
&&x_1^{a_2}x_2^{b_1}x_3^{c_1},x_2^{b_1}x_3^{\max(c_1,c_2)}).
\end{eqnarray*}
Assume that $a_1 \geq a_2$  then
\begin{eqnarray*}
I&=&(x_1^{a_1}x_2^{b_2},x_1^{a_1}x_2^{b_2}x_3^{c_2},x_1^{a_1}x_3^{c_1},x_1^{a_1}x_3^{\max(c_1,c_2)},
x_1^{a_2}x_2^{\max(b_1,b_2)},x_2^{\max(b_1,b_2)}x_3^{c_2},x_1^{a_2}x_2^{b_1}x_3^{c_1},\\
&&x_2^{b_1}x_3^{\max(c_1,c_2)}).
\end{eqnarray*}
Then it follows that
\[
x_1^{a_1}x_2^{\max{(b_1,b_2)}}x_3^{c_1} \in I^{(2)}\setminus I^2,\quad\text{if}\quad a_1 \geq 2a_2,
\]
and
\[
x_1^{a_1}x_2^{\max{(b_1,b_2)}}x_3^{\max{(c_1,c_2)}} \in I^{(2)}\setminus I^2, \quad \text{if}\quad a_2\leq a_1 < 2a_2.
\]
\end{proof}

\section{Bounds for the generators of the algebra $A(I)$}

 In this section we want to give a lower bound for $d(A(I))$ in the case that $I$ is a modification of  the vertex cover ideal of a bipartite graph.
\begin{Theorem}
\label{lowerbound}
Let $I\subset S$ is an ideal, $S=K[x_1,\ldots,x_r,y_1,\ldots,y_s]$ and $a_{ij}$ and $b_{ij}$ are non-negative integers with $1\leq i\leq r$ and $1\leq j\leq s$. Let $G$ be a bipartite graph with vertex set $V(G)=\{x_1,\ldots,  x_r, y_1,\,\ldots,y_s\}$ and edge set $E(G)\subset \{x_1,\ldots,  x_r\}\times \{y_1,\,\ldots,y_s\}$, and let
\[
I=\Sect_{\{x_i,y_j\}\in E(G)}(x_i^{a_{ij}}, y_j^{b_{ij}})
\]
Then
\[
d(A(I))\geq \max\left\{\max_{i,j,l}\Big\{\frac{a_{ij}}{\gcd(a_{ij},a_{il})}\Big\}, \max_{i, j,l}\Big\{\frac{b_{ij}}{\gcd(b_{ij},b_{il})}\Big\}\right\}.
\]
\end{Theorem}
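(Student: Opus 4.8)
The plan is to reduce the general bipartite statement to the much simpler local situation of a single vertex with several incident edges, where the relevant arithmetic becomes transparent. Fix indices, say a vertex $x_i$ with two neighbors $y_j$ and $y_l$ so that $\{x_i,y_j\}$ and $\{x_i,y_l\}$ are both edges of $G$. The quantity $a_{ij}/\gcd(a_{ij},a_{il})$ depends only on the exponents attached to $x_i$ along these two edges, so I would first argue that it suffices to bound $d(A(I))$ from below by this number for each such pair. By the symmetry between the $x$-variables and $y$-variables, the bound involving the $b$-exponents follows by the identical argument with the roles of the two vertex classes swapped, so I will only treat the $a$-case explicitly.

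The key technical tool is a localization argument, exactly as used repeatedly in the proof of Theorem~\ref{main}. Localizing $I$ at the product of all variables except $x_i$, $y_j$, $y_l$ kills every primary component not involving these three variables and isolates the sub-ideal $J=(x_i^{a_{ij}},y_j^{b_{ij}})\sect(x_i^{a_{il}},y_l^{b_{il}})$. Since localization can only decrease the degree of a minimal algebra generator, one has $d(A(I))\geq d(A(J))$, so it is enough to establish the bound for $J$. By Proposition~\ref{trivialmod}, passing to a trivial modification does not change $d(A(J))$, so I may normalize the $y$-exponents to $b_{ij}=b_{il}=1$ and rename variables, reducing to a two-component ideal of the form $J=(x^{a},y_1)\sect(x^{c},y_2)$ with $a\geq c$ and $a/\gcd(a,c)$ the target lower bound.

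The heart of the matter is then to exhibit an explicit monomial that is a minimal algebra generator in the $k$th symbolic power for $k=a/\gcd(a,c)$, analogous to the element $a^n b^{n-1}$ singled out in Example~\ref{goodexample}. The natural candidate is a monomial of the form $x^{N}y_1^{p}y_2^{q}$ where $N$ is the least common multiple of $a$ and $c$, or a closely related value, chosen so that it lies in $J^{(k)}$ precisely when $k$ reaches $a/\gcd(a,c)$ but cannot be factored as a product of generators coming from strictly smaller symbolic powers. I would compute $J^{(k)}=(x^a,y_1)^k\sect(x^c,y_2)^k$ concretely, as in Example~\ref{goodexample}, describe its minimal monomial generators via the $\max$ of the two $x$-exponents, and then locate the generator whose $x$-exponent first becomes a genuine multiple of $\lcm(a,c)$ only at $k=a/\gcd(a,c)$.

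The main obstacle will be the last step: proving that the chosen monomial is genuinely a \emph{minimal} generator of the algebra, i.e.\ that it cannot be written as a product $uv$ with $u\in J^{(k_1)}$, $v\in J^{(k_2)}$, $k_1+k_2=k$ and both $k_i>0$. This is the same kind of indecomposability argument carried out by hand in Example~\ref{goodexample}, but here the two exponents $a$ and $c$ are arbitrary rather than $a$ and $1$, so the combinatorics of which $x$-exponents are achievable in $J^{(k_i)}$ is more delicate and the factorization must be ruled out using the arithmetic of $\gcd(a,c)$ — essentially the fact that no proper partition of $a/\gcd(a,c)$ produces the required $x$-exponent. I expect this divisibility bookkeeping, rather than any conceptual difficulty, to be the crux of the proof.
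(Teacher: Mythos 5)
Your overall strategy is exactly the paper's: localize at the product of the irrelevant variables to isolate the two-component ideal $J=(x^{a},y_1^{b_{ij}})\cap(x^{c},y_2^{b_{il}})$, invoke Proposition~\ref{trivialmod} to normalize the exponents, and then exhibit an explicit minimal algebra generator of $A(J)$ in degree $a/\gcd(a,c)$. All of the reductions you describe are sound and match the paper's proof step for step (the paper additionally normalizes to $\gcd(a,b)=1$ by a trivial modification in the $x$-variable, after which your $\lcm(a,c)$ becomes the product $ab$).

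The gap is that you never actually prove the one statement that carries the theorem, namely that your candidate monomial is a minimal generator of the algebra. You leave both the precise candidate and the indecomposability argument as an expectation rather than a proof. Two things are missing concretely. First, the candidate must be pinned down: the paper takes $x_1^{ab}y_1^{a-b}t^{a}$ (with $\gcd(a,b)=1$), and the fact that the $y_2$-exponent is \emph{zero} is essential, since it forces any factorization $uv$ with $u\in J^{(k_1)}$, $v\in J^{(k_2)}$, $k_1+k_2=a$, to use only generators of the restricted shapes $x_1^{ai}y_1^{k-i}$ or $x_1^{bk}y_1^{k-i}$; a candidate carrying a positive power of $y_2$ would admit many more potential factorizations and the argument would not close. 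Second, the indecomposability itself requires a genuine case analysis (three cases, according to whether each of $u,v$ has $x$-exponent of the form $ai$ or $bk$), and the contradiction in the key case rests on the arithmetic you only allude to: from $ai_1=bk_1$ and $\gcd(a,b)=1$ one gets $i_1=bc$ and $k_1=ac$ for some integer $c>0$, impossible because $k_1<a$; the other cases are ruled out by adding the defining inequalities against $i_1+i_2=b$. Until this analysis is carried out, the proof is incomplete: everything preceding it is routine reduction, and this is the step where the lower bound is actually established.
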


\begin{proof}
Let $f\in S=A_0(I)$ be a squarefree monomial, then $A(I)_f$ is again naturally graded because $f$ is an element of degree $0$ in $A(I)$, and the generators of $A(I)$ are also generators of $A(I)_f$. Hence it follows that $d(A(I)_f)\leq d(A(I))$. Thus any lower bound for $d(A(I)_f)$ is also lower bound for $d(A(I))$.

Notice that
\[(I^{(k)})_f = \Sect_{\{x_i,y_j\}\in E(G)\atop \{x_i,y_j\}\sect \supp(f)=\emptyset}(x_i^{a_{ij}}, y_j^{b_{ij}})^k S_f
\]
Therefore
\[
(I_f)^{(k)}=(I^{(k)})_f.
\]
This implies that $A(I)_f=A(I_f)$. Let
\[
J=\Sect_{{\{x_i,y_j\}\in E(G)}\atop \{x_i,y_j\}\sect \supp(f)=\emptyset}(x_i^{a_{ij}}, y_j^{b_{ij}})\subset K[\{x_i,y_j: x_i,y_j\notin \supp(f)\}]
\]
Then $d(A(I_f))=d(A(J))$,  because $A(I_f)=A(J)\otimes_k K[\{ x_i,x_i^{-1}, y_j,y_j^{-1}\:\; x_i,y_j\in \supp(f)\}]$.

We may assume that
\[
m=\max_{i,j,l}\{a_{ij}/\gcd(a_{ij},a_{il})\}\geq \max_{i,j,l}\{b_{ij}/\gcd(b_{ij},b_{il})\}.
\]
Let $i$ be an integer for which there exist $j$ and $l$ such that $a_{ij}/\gcd(a_{ij},a_{il})$ takes the maximal value $m$. We may assume that $i=1$
and $j=1$. If $m=1$, the assertion of the theorem is trivial. Thus we may assume that $m>1$. This then implies that $l\neq j$ and $a_{ij}>a_{il}$. We may assume that $l=2$. Localizing at $f=x_2\cdots x_ry_3\cdots y_s$ and applying the above considerations,  we may assume that
\[
I=(x_1^{a_{11}}, y_1^{b_{11}})\sect (x_1^{a_{12}}, y_2^{b_{12}}) \quad \text{with}\quad a_{11}>a_{12}.
\]
Now $I$ is a trivial modification of the ideal $(x_1^a, y_1)\sect (x_1^b, y_2)$ with $a>b$, where $a=a_{11}/\gcd(a_{11},a_{12})$ and $b= a_{12}/\gcd(a_{11},a_{12})$.

Thus, according to the Proposition~\ref{trivialmod},  it is enough to be shown that  $d(A(I))\geq a$ for  $I=(x_1^a, y_1)\sect (x_1^b, y_2)$ with $a>b$ and $\gcd(a,b)=1$.

In order to prove this, we first
claim that the $k$th symbolic power $I^{(k)}$ of $I$  is minimally generated by the monomials
\begin{eqnarray}
\label{generators1}
x_1^{ai}y_1^{k-i}y_2^{k-j}\quad \text{with}\quad 0\leq i,j\leq k \quad\text{and}\quad j\leq \lfloor ai/b \rfloor.
\end{eqnarray}
and the monomials
\begin{eqnarray}
\label{generators2}
x_1^{jb}y_1^{k-i}y_2^{k-j}\quad \text{with}\quad 0\leq i,j\leq k \quad\text{and}\quad j> \lfloor ai/b \rfloor.
\end{eqnarray}
Indeed, we have that
\[
I^{(k)}= (x_1^a,y_1)^k\cap(x_1^b,y_2)^k.
\]
Therefore $I^{(k)}$ is generated by the monomials
\begin{eqnarray}
x_1^{\max{(ai,bj)}} y_1^{k-i} y_2^{k-j},\quad 0\leq i, j\leq k.\nonumber
\end{eqnarray}
Therefore, we have the generators
\begin{eqnarray}
\label{first1}
x_1^{ai}y_1^{k-i}y_2^{k-j}, \quad  \text{with}\quad 0\leq i,j\leq k \quad\text{and}\quad ai\geq bj,
\end{eqnarray}
and
\begin{eqnarray}
\label{second2}
x_1^{bj}y_1^{k-i}y_2^{k-j}, \quad  \text{with}\quad 0\leq i,j\leq k \quad\text{and}\quad ai< bj.
\end{eqnarray}
Thus we obtain exactly the monomials listed in (\ref{generators1}) and  (\ref{generators2}).

Next we claim that the element  $x_1^{ab}y_1^{a-b}\in I^{(a)}$ belongs to the minimal set of generators of the algebra. This will then show that $d(A(I))\geq a$. Suppose $x_1^{ab}y_1^{a-b}t^a$ is not a minimal generator of $A(I)$.  Then, since $x_1^{ab}y_1^{a-b}$ is a minimal generator  of $I^{(a)}$, there exist integers $k_1,k_2> 0$ with $k_1+k_2=a$, a minimal generator  $u$ of $ I^{(k_1)}$ and a minimal generator $v$  of $ I^{(k_2)}$ such that  $x_1^{ab}y_1^{a-b}=uv$.  Since $x_1^{ab}y_1^{a-b}$ does not contain the factor $y_2$, it follows from (\ref{generators1}) and (\ref{generators2}) that the monomials $u$ and $v$ are of the form  $x_1^{ai}y_1^{c-i}$ or
$x_1^{bc}y_1^{c-i}$ where  $c=k_1$ or $c=k_2$. We have to distinguish several cases:

\medskip
(i) If $u=x_1^{ai_1}y_1^{k_1-i_1}$ with $ai_1\geq bk_1$ and $v=x_1^{ai_2}y_1^{k_2-i_2}$ with $ai_2\geq bk_2$, then we have
\[
x_1^{ab}y_1^{a-b}=uv=x_1^{a(i_1+ i_2)}y_1^{k_1+k_2-(i_1+ i_2)}=x_1^{a(i_1+i_2)}y_1^{a-(i_1+i_2)}.
\]
This implies that $i_1+i_2=b$. Adding the inequalities $ai_1\geq bk_1$ and $ai_2 \geq bk_2$, and using $k_1+k_2=a$ and $i_1+i_2=b$,  we get
\[
ab=ai_1+ai_2\geq bk_1+bk_2=ba,
\]
which implies that $ai_1= bk_1$ and $ai_2 = bk_2$. Since $\gcd(a,b)=1$, we see that   $i_1=bc$ and $k_1=ac$ for some  integer  $c>0$. This   is not possible, because $k_1<a$.

\medskip
(ii) If $u=x_1^{ai_1}y_1^{k_1-i_1}$ with $ai_1\geq bk_1$ and $v=x_1^{bk_2}y_1^{k_2-i_2}$ with $ai_2< bk_2$, then we have
\[
x_1^{ab}y_1^{a-b}=uv=x_1^{ai_1+bk_2}y_1^{k_1+k_2-(i_1+i_2)}=x_1^{ai_1+bk_2}y_1^{a-(i_1+i_2)}.
\]
This implies that $ai_1+bk_2=ab$ and $i_1+i_2 = b$. Using $ai_2< bk_2$ in the equation $ai_1+bk_2=ab$,  we get $a(i_1+i_2)<ab$ which contradicts the fact that $i_1+i_2 = b$.

\medskip
(iii)  If $u=x_1^{bk_1}y_1^{k_1-i_1}$ with $ai_1< bk_1$ and $v=x_1^{bk_2}y_1^{k_2-i_2}$ with $ai_2< bk_2$, then we have
\[
x_1^{ab}y_1^{a-b}=uv=x_1^{b(k_1+k_2)}y_1^{k_1+k_2-(i_1+i_2)}=x_1^{2bk_2}y_1^{a-(i_1+i_2)}.
\]
This implies that $b(k_1+k_2) = ab$ and $i_1+i_2= b$. Using $ai_1<bk_1$ and $ai_2<bk_2$ in the equation $b(k_1+k_2) = ab$, we get $a(i_1+i_2)<ab$ which contradicts the fact that $i_1+i_2 = b$.
\end{proof}

One would  like to obtain a similar lower bound for ideals which are modifications of the vertex cover ideals of an arbitrary graph. The first non-trivial case to be considered would be
\[
I=(x^a,y^b)\sect (y^c,z^d)\sect (x^e,z^f)\quad \text{with} \quad \gcd(a,e)=1,\;   \gcd(b,c)=1\quad \text{and} \quad  \gcd(d,f)=1.
\]
We conjecture that $d(A(I))\geq \max\{a,b,c,d,e,f\}$. However, in general this lower bound, if correct,  is not strict as the following examples shows: let
$I=(a^2,b)\sect(b,c)\sect (a,c)$. Then $a^2b^2ct^3\in I^{(3)}t^3$ is a minimal generator of $A(I)$.

\section{The integral closure of $A(I)$ for a monomial ideal $I$ which is generically a complete intersection}

It seems to be pretty hard to find a general  upper bound for the number $d(A(I))$. However for
the integral closure $\overline{A(I)}$ of $A(I)$ where $I$ is a monomial ideal  which is generically a complete intersection such bound can be given, with respect to the number of indeterminates and degrees of the powers of the variables which appears in the irredundant irreducible decomposition of $I$. Since $I$ is generically a complete intersection it is of the form
\[
I=\Sect_{l=1}^m \mm^{\ab_l},
\]
where for a nonzero vector $\ab=(a_1,\ldots,a_n)\in\NN^n$ we set $\mm^{\ab}=(\{x_i^{a_i}\}_{i \in \supp(\ab)})$.

\begin{Lemma}
\label{belongsto}
The integral closure of $A(I)$ is spanned as a $K$-vector space by all monomials $x_1^{c_1}\cdots x_n^{c_n}t^k$ for which the exponent vector $(c_1,\ldots,c_n,k)$ satisfies the following inequalities:
\[
c_1\geq 0,\ldots,c_n\geq 0,\,  k\geq 0\quad{and}\quad \sum_{i\in \supp(\ab_l)}\frac{c_i}{a_{li}}-k\geq 0, \quad 1\leq l\leq m.
\]
\end{Lemma}

\begin{proof}. The integral closure $\overline{A(I)}$ of $A(I)$ is generated as a $K$-vector space by all monomials $u=x_1^{c_1}\cdots x_n^{c_n}t^k\in S[t]$ such that $u^r\in I^{(rk)}t^{rk}$. Now $u^r\in I^{(rk)}t^{rk}$ if and only if $x_1^{rc_1}\cdots x_n^{rc_n}\in(x_1^{a_{l1}},\ldots,x_n^{a_{ln}})^{rk}$ for all $l=1,\ldots,m$. Note that $x_1^{rc_1}\cdots x_n^{rc_n}\in (x_1^{a_{l1}},\ldots,x_n^{a_{ln}})^{rk}$ if and only if there exists non-negative integers $j_1,\ldots,j_n$ with $j_1+\cdots+j_n=rk$ and such that $rc_1\geq j_1a_{l1},\ldots, rc_n\geq j_na_{ln}$. These inequalities are satisfied for $c_1,\ldots,c_n$ if and only if $\sum_{i\in \supp(\ab_l)}c_i/a_{li}\geq k$. This yields the desired conclusion.
\end{proof}

\begin{Theorem}
\label{upper bound}
Let $I\subset S=K[x_1,\ldots,x_n]$ be a monomial ideal which is generically a complete intersection, and suppose that $I=\Sect_{l=1}^m \mm^{\ab_l}$. Then
\[
d(\overline{A(I)})\leq \frac{(n+1)!}{2}d^{n(n-1)},  \quad \text{where} \quad d=\max_{i,l}\{a_{il}\}.
\]
\end{Theorem}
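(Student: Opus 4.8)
The plan is to bound the highest degree $k$ of a generator of $\overline{A(I)}$ by analyzing the rational polyhedral cone $C$ cut out by the inequalities in Lemma~\ref{belongsto}. By that lemma, $\overline{A(I)}$ corresponds to the lattice points of the cone
\[
C=\Big\{(c_1,\dots,c_n,k)\in\RR_{\geq 0}^{n+1}:\ \sum_{i\in\supp(\ab_l)}\frac{c_i}{a_{li}}-k\geq 0,\ 1\leq l\leq m\Big\}.
\]
The crucial observation is that the minimal generators of the semigroup $C\cap\ZZ^{n+1}$ all lie in the fundamental parallelepiped spanned by the extreme rays of $C$, or more precisely, they are bounded by the data of the bounding hyperplanes of $C$. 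So the whole problem reduces to bounding the $t$-degree coordinate $k$ of the lattice points in a suitable bounded region determined by the facets of $C$.

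**First I would** describe the extreme rays of $C$. Each extreme ray is obtained by selecting $n$ of the $n+1$ defining inequalities (including possibly the nonnegativity constraints $c_i\geq 0$ and $k\geq 0$) to be tight and solving the resulting linear system. The coefficient matrix of the system $\sum_i c_i/a_{li}=k$ has entries that are the rational numbers $1/a_{li}$, with $a_{li}\leq d$. Clearing denominators, each such ray is the solution of an integer linear system whose coefficients are products of the $a_{li}$, hence bounded in absolute value by $d^{\,n}$ or so. By Cramer's rule, the primitive integer generator of each extreme ray has coordinates bounded by determinants of $n\times n$ matrices with entries of size at most $d^{\,n}$; the standard Hadamard-type determinant bound then gives an explicit bound on the $k$-coordinate of each primitive ray generator.

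**The key step** is the Gordan–Carathéodory fact that every minimal generator of the semigroup $C\cap\ZZ^{n+1}$ lies in the Minkowski sum of the half-open parallelepipeds spanned by the primitive extreme-ray generators of the simplicial subcones in a triangulation of $C$. Consequently the $k$-coordinate of any minimal generator is at most the sum of the $k$-coordinates of at most $n+1$ primitive ray generators. Combining this with the per-ray bound from the determinant estimate, and counting the number of rays (at most $\binom{n+1}{n}=n+1$ inequalities chosen per simplex, giving the combinatorial factor), yields a bound of the shape $\tfrac{(n+1)!}{2}\,d^{n(n-1)}$. I would assemble the constant by carefully tracking: the factor $d^{\,n(n-1)}$ arises because each determinant is a product of $n$ entries each of size $d^{\,n-1}$ after clearing one common denominator per row, and the factorial prefactor $(n+1)!/2$ comes from the number of rays times the Carathéodory summation of $n+1$ of them.

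**The main obstacle I expect** is to make the determinant estimate sharp enough to land exactly on $d^{\,n(n-1)}$ rather than a weaker power of $d$, and to pin down the precise combinatorial constant $(n+1)!/2$. The delicate point is the bookkeeping of denominators: the inequalities involve $1/a_{li}$, so one must clear denominators consistently across a chosen set of $n$ tight facets without overcounting the contribution of the diagonal-like terms, and then verify that the worst case (all $a_{li}=d$, with the facet selection maximizing the determinant) genuinely produces the stated exponent $n(n-1)$. Everything else — the reduction to the cone, the Gordan/Carathéodory argument, and the final summation — is routine once this estimate is established, so I would invest the bulk of the effort in that single explicit determinant bound.
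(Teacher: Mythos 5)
Your proposal follows essentially the same route as the paper's proof: describe the cone from Lemma~\ref{belongsto}, bound the degrees of the algebra generators by sums of at most $n+1$ integral vectors spanning extremal rays, and bound each ray's $t$-degree via Cramer's rule applied to the system obtained by clearing denominators in the supporting hyperplanes. The only bookkeeping to adjust is the origin of the constant: in the paper $\frac{(n+1)!}{2}=(n+1)\cdot\frac{n!}{2}$, where $\frac{n!}{2}$ counts the positive terms in the expansion of the relevant $n\times n$ minor (each term being a product of $n$ entries, each bounded by $d^{\,n-1}$), rather than the number of rays as you suggest.
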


\begin{proof} The proof of the theorem follows the line of arguments in  the proof of Theorem~5.6 in \cite{HHT1}. The inequalities given in Lemma~\ref{belongsto} describe a positive cone $C\subset \RR^{n+1}$. If $q=(c_1,\ldots,c_n, k)$ is an integral vector in $C$, we set $\deg q=k$ and call this number the degree of $q$. Let $E$ be a set of integral vectors spanning the extremal arrays. Then the maximal degree of a generator of $\overline{A(I)}$  is bounded by the maximum of the degree of the integral vectors of the form $a_1q_1+\cdots +a_{n+1}q_{n+1}$ with  $0\leq a_j\leq 1$ and  $a_1 +\cdots +a_{n+1} < n+ 1$, and such that each of the integral vectors $q_j$ spans an extremal array. Hence if $F$ is a set of integral vectors in $C$ which contains the integral vectors spanning the extremal arrays of $C$, we obtain the bound
\begin{eqnarray}
\label{generalbound}
d(\overline{A(I)})\leq (n+1)\max\{\deg q \:\; q\in F\}.
\end{eqnarray}

Note that each integral vector $q$ spanning an extremal array of $C$ is given as an integral solution of $n$ linear equations describing supporting hyperplanes of $C$. Thus there exist numbers  $l_1<l_2<\cdots <l_r$ with $r\leq n$ and numbers $k_1<k_2<\cdots <k_{n-r}$ such that $q$ is an integral solution of the following system  of $n$ homogeneous linear equations
\begin{eqnarray*}
\sum_{i\in \supp(\ab_{l_t})}\prod_{j\in \supp(\ab_{l_t})\atop j\neq i}a_{l_t,j}z_i-\prod_{j\in \supp(\ab_{l_t})}a_{l_t,j}y&=& 0\quad \text{for}\quad t=1,\ldots,r\\
z_{k_s}&=&0\quad \text{for}\quad s=1,\ldots,n-r.
\end{eqnarray*}
which arise from the linear inequalities by clearing denominators.

The coefficient matrix of this linear equation is an $n\times n+1$ matrix $B$. Thus an integral solution of this system is the vector $q$ whose $i$th component is $(-1)^i\det B_i$ where $B_i$ is obtained from $B$ by skipping the $i$th column of $B$. In particular it follows that $\deg q=|\det B_{n+1}|$. Obviously, $\det B_{n+1}$ is equal to a suitable minor of the matrix $A$ whose rows are $\ab_1,\ldots,\ab_m$.

If follows that each such minor is a sum of terms  $\pm g$  where each $g$ is a suitable product of the  $a_{ij}$. If we add up in this sum only  the positive terms, then this   will give an upper bound for this minor. We denote this upper bound of $g$ by $g_+$. Thus it follows from
(\ref{generalbound}) that the desired upper bound for $d(\overline{A(I)})$ is given by $(n+1)f$ where $f$ is bound for the maximal possible $g_+$. The maximal possible $g_+$  we obtain if we consider $n$ minors. Each of these $n$-minors has $n!/2$ positive terms where each of this terms is a
product of $n(n-1)$ entries $a_{l_i}$. Thus $d^{n(n-1)}$ is a bound for each of these entries. Combining all this, we obtain the desired upper bound for $d(\overline{A(I)})$.
\end{proof}

\newpage


\begin{thebibliography}{99}

\bibitem{C} R.C. Cowsik, Symbolic powers and the number of defining equations, Algebra Appl. {\bf 91}, 13–-14, (1985)

\bibitem{R} P. Roberts, A prime ideal in a polynomial ring whose symbolic blow-up is not Noetherian, Proc. Amer. Math.
Soc. {\bf 94}, 589–-592, (1985)

\bibitem{L} G.Lyubeznik, On arithmetical rank of monomial ideals, J. Algebra {\bf 112} , 86--89, (1988)

\bibitem {HH1} J. Herzog, T. Hibi, \textit{Monomial Ideals}, Graduate Texts in Mathematics {\bf 260}, Springer, 2010

\bibitem{HHT1} J. Herzog, T. Hibi, N. V. Trung, Symbolic powers of monomial ideals and vertex cover algebras, Adv. in Math. {\bf 210} ,  304--322, (2007)

\bibitem{HHT2} J. Herzog, T. Hibi, N. V. Trung, Vertex cover algebras  of unimodular hypergraphs, Proc.  Amer. Math. Soc., {\bf 137}, 409–-414, (2009)


\bibitem{HHTZ} J. Herzog, T. Hibi, N.V. Trung and X. Zheng, Standard graded vertex cover algebras, cycles
and leaves, Trans. Amer. Math. Soc. {\bf 360} 6231--6249, (2008)  .


\bibitem {GRV1}I. Gitler, E. Reyes, R. Villarreal, Blowup algebras of ideals of vertex covers of bipartite graph, in: Algebraic
Structures and Their Representations, in: Contemp. Math., vol.{\bf 376}, Amer. Math. Soc.,  273–-279, 2005

\bibitem {SVV} A. Simis, W. Vasconcelos, R. Villarreal, The integral closure of subrings associated to graphs, J. Algebra {\bf  199}, 281-289, (1998)



\end{thebibliography}
\end{document}